\newcommand{\executeiffilenewer}[3]{%
\ifnum\pdfstrcmp{\pdffilemoddate{#1}}%
{\pdffilemoddate{#2}}>0%
{\immediate\write18{#3}}\fi%
}
\newcommand{%
\executeiffilenewer{.svg}{.pdf}%
{inkscape -z -D --file=.svg %
--export-pdf=.pdf --export-latex}%
\input{.pdf_tex}%
}[1]{%
\executeiffilenewer{#1.svg}{#1.pdf}%
{inkscape -z -D --file=#1.svg %
--export-pdf=#1.pdf --export-latex}%
\input{#1.pdf_tex}%
}
\journal{Linear Algebra and its Applications}
\newtheorem{theo}{Theorem}[section]
\newtheorem{lema}[theo]{Lemma}
\newtheorem{coro}[theo]{Corollary}
\def\matrix0{{\mbox {\boldmath $O$}}}
\def\vec0{\mbox{\bf 0}}
\begin{document}

\begin{frontmatter}
\title{Spectral bounds for the $k$-independence number of a graph}

\author{Aida Abiad\corref{aida}}\ead{A.Abiad.Monge@maastrichtuniversity.nl}

\author{Sebastian M.\ Cioab\u{a}\corref{sebi}}\ead{cioaba@udel.edu}

\author{Michael Tait\corref{mike}}\ead{mtait@cmu.edu}

\cortext[aida]{Dept. of Quantitative Economics, Operations Research, Maastricht University, Maastricht, The Netherlands.}
\cortext[sebi]{Dept. of Math. Sciences, University of Delaware, Newark, DE 19707, USA. Research partially supported by NSF grant DMS-1600768}
\cortext[mike]{Dept. of Math. Sciences, Carnegie Mellon University, Pittsburgh, PA 15224, USA. Research partially supported by NSF grant DMS-1606350}

%\maketitle
\date{}

\begin{abstract}
In this paper, we obtain two spectral upper bounds for the $k$-independence number of a graph which is is the maximum size of a set of vertices at pairwise distance greater than $k$. We construct graphs that attain equality for our first bound and show that our second bound compares favorably to previous bounds on the $k$-independence number.
%As a consequence, we obtain a short proof of an unsigned version of the Alon-Boppana-Serre Theorem.

\noindent {\bf MSC:} 05C50, 05C69
\end{abstract}

 \end{frontmatter}
 
 \noindent{\em Keywords:} $k$-independence number; graph powers; eigenvalues; Expander-Mixing lemma.

%%%%%%%%%%%%%%%%%%%%%%%%%%%%%%%%%%%%%%%%%%%%%%%%
\section{Introduction}
%%%%%%%%%%%%%%%%%%%%%%%%%%%%%%%%%%%%%%%%%%%%%%%%

The independence number of a graph $G$, denoted by $\alpha (G)$, is the size of the largest independent set of vertices in $G$. A natural generalization of the independence number is the $k$-\emph{independence number} of $G$, denoted by $\alpha_k (G)$ with $k\ge 0$, which is the maximum number of vertices that are mutually at distance greater than $k$. Note that $\alpha_0(G)$ equals the number of vertices of $G$ and $\alpha_1(G)$ is the independence number of $G$.

The $k$-independence number is an interesting graph theoretic parameter that is closely related to coding theory, where codes and anticodes are $k$-independent sets in Hamming graphs (see \cite[Chapter 17]{MWS}). In addition, the $k$-independence number of a graph has been studied in various other contexts (see  \cite{AF2004, DZ2003,FG1998,FGY1996,FGY1997,HPP2001,N} for some examples) and is related to other combinatorial parameters such as average distance \cite{FH1997}, packing chromatic number \cite{GHHHR2008}, injective chromatic number \cite{HKSS2002}, and strong chromatic index \cite{M2000}. It is known that determining $\alpha_k$ is NP-Hard in general \cite{K1993}.

In this article, we prove two spectral upper bounds for $\alpha_k$ that generalize two well-known bounds for the independence number: Cvetkovi\'{c}'s inertia bound \cite{C1972} and the Hoffman ratio bound (see \cite[Theorem 3.5.2]{BH2012} for example). Our motivation behind this work is obtaining non-trivial and tight generalizations of these classical results in spectral graph theory that depend on the parameters of the original graph and not of its higher powers. Note that $\alpha_k$ is the independence number of $G^k$, the $k$-th power of $G$. The graph $G^k$ has the same vertex set as $G$ and two distinct vertices are adjacent in $G^k$ if their distance in $G$ is $k$ or less. In general, even the simplest spectral or combinatorial parameters of $G^k$ cannot be deduced easily from the similar parameters of $G$ (see \cite{DG,DMS,Hegarty}). The bounds in our main results (Theorem \ref{theo:kindependencenumberbound} and Theorem \ref{hoffman theorem}) depend on the eigenvalues and eigenvectors of the adjacency matrix of $G$ and do not require the spectrum of $G^k$. We prove our main results in Section \ref{section:generalizedinetiabound} and Section \ref{section:hoffmanratiobound}. We construct infinite examples showing that the bound in Theorem \ref{theo:kindependencenumberbound} is tight, but we have not been able to determine whether or not equality can happen in Theorem \ref{hoffman theorem}. We conclude our paper with a detailed comparison of our bounds to previous work of Fiol \cite{F1999, F1997} and some directions for future work.
%, and in Section \ref{serre}, as a consequence of one of our bounds, we give a proof of an unsigned version of the Alon-Boppana-Serre Theorem.

%%%%%%%%%%%%%%%%%%%%%%%%%%%%%%%%%%%%%%%%%%%%%%%%
\section{Preliminaries}\label{preliminaries}
%%%%%%%%%%%%%%%%%%%%%%%%%%%%%%%%%%%%%%%%%%%%%%%%
Throughout this paper $G=(V,E)$ will denote a graph (undirected, simple and loopless) on vertex set $V$ with $n$ vertices, edge set $E$ and adjacency matrix $A$ with eigenvalues $\lambda_{1}\geq \cdots \geq \lambda_{n}$. The following result was proved by Haemers in his Ph.D. Thesis (see \cite{H1995} for example).

\medskip
\begin{lema}[Eigenvalue Interlacing, \cite{H1995}]\label{theo:interlacing}
Let $A$ be a symmetric $n\times n$ matrix with eigenvalues
$\lambda_{1}\geq \lambda_{2}\geq \cdots \geq \lambda_{n}$. For some
integer $m< n$, let $S$ be a real $n\times m$ matrix such that
$S^{\top}S=I$ (its columns are orthonormal), and consider
the $m\times m$ matrix $B=S^{\top}AS$, with eigenvalues
$\mu_{1}\geq \mu_{2}\geq \cdots \geq \mu_{m}$. Then, the eigenvalues of $B$ interlace the eigenvalues
    of $A$, that is, $\lambda_{i}\geq \mu_{i}\geq \lambda_{n-m+i}$, for $1\leq i
\leq m$.

\end{lema}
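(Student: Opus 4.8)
The plan is to obtain both interlacing inequalities from the Courant--Fischer min--max theorem. Recall that for a symmetric $n\times n$ matrix $A$ with eigenvalues $\lambda_1\geq\cdots\geq\lambda_n$ and for each $1\leq i\leq n$,
\begin{equation*}
\lambda_i=\max_{\dim U=i}\ \min_{0\neq x\in U}\frac{x^\top A x}{x^\top x}=\min_{\dim W=n-i+1}\ \max_{0\neq x\in W}\frac{x^\top A x}{x^\top x},
\end{equation*}
where $U,W$ range over subspaces of $\mathbb{R}^n$; the analogous formulas hold for $B$ with $\mu_j$ and subspaces of $\mathbb{R}^m$. The whole argument then reduces to transporting the extremal subspaces for $B$ into $\mathbb{R}^n$ via $S$.

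The key observation I would isolate first is that $S$ preserves Rayleigh quotients and dimensions. Since $S^\top S=I$, for every nonzero $y\in\mathbb{R}^m$ we have $(Sy)^\top(Sy)=y^\top S^\top S y=y^\top y\neq 0$, so $Sy\neq 0$ and
\begin{equation*}
\frac{y^\top B y}{y^\top y}=\frac{y^\top S^\top A S y}{y^\top S^\top S y}=\frac{(Sy)^\top A(Sy)}{(Sy)^\top(Sy)}.
\end{equation*}
Orthonormality of the columns of $S$ also makes $S$ injective, so for any subspace $U\subseteq\mathbb{R}^m$ the image $S(U)$ is a subspace of $\mathbb{R}^n$ with $\dim S(U)=\dim U$. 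In other words, $S$ identifies the Rayleigh quotient of $B$ on $U$ with the Rayleigh quotient of $A$ on $S(U)$.

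With this in hand, both bounds are immediate. For $\mu_i\leq\lambda_i$ I would take an $i$-dimensional $U^\star\subseteq\mathbb{R}^m$ attaining the maximum in the max--min formula for $\mu_i$; then $S(U^\star)$ is an $i$-dimensional subspace of $\mathbb{R}^n$ on which the two Rayleigh quotients agree, and the max--min characterization of $\lambda_i$ applied to this single subspace gives $\mu_i=\min_{0\neq x\in S(U^\star)}x^\top A x/x^\top x\leq\lambda_i$. Symmetrically, for $\mu_i\geq\lambda_{n-m+i}$ I would take an $(m-i+1)$-dimensional $W^\star$ attaining the minimum in the min--max formula for $\mu_i$; then $S(W^\star)$ has dimension $m-i+1$, and the min--max characterization of $A$ applied to $S(W^\star)$ yields $\mu_i=\max_{0\neq x\in S(W^\star)}x^\top A x/x^\top x\geq\lambda_{n-(m-i+1)+1}=\lambda_{n-m+i}$.

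The conceptual content is light, so the step I would treat most carefully is the index bookkeeping in the min--max formula, since that is precisely where the shift from $\lambda_i$ to $\lambda_{n-m+i}$ is produced and where an off-by-one slip is easy. Concretely, I need the fact that any subspace of dimension $d$ has maximal Rayleigh quotient at least $\lambda_{n-d+1}$, read off from the min--max formula with $n-j+1=d$; applying this with $d=m-i+1$ is what outputs the index $n-m+i$. The second point worth stating explicitly is that the argument genuinely uses $S^\top S=I$ rather than mere full column rank: it is the orthonormality that makes $S$ an isometry and hence simultaneously preserves Rayleigh quotients and dimensions, which is exactly what aligns the extremal problems for $B$ with restricted extremal problems for $A$.
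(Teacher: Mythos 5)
Your proof is correct. Note, however, that the paper does not prove this lemma at all: it is quoted verbatim from Haemers \cite{H1995} as a known result, so there is no in-paper argument to compare against. Your Courant--Fischer argument is one of the two standard proofs and is carried out cleanly; in particular the two points you flag as delicate are handled correctly --- the identity $y^\top By/y^\top y=(Sy)^\top A(Sy)/(Sy)^\top(Sy)$ together with injectivity of $S$ (both consequences of $S^\top S=I$) is exactly what is needed, and the index computation $n-(m-i+1)+1=n-m+i$ is right. For reference, Haemers' own proof in \cite{H1995} is a close cousin: instead of invoking the full min--max theorem he picks, for each $i$, a nonzero vector in the intersection of the $S$-image of the span of the first $i$ eigenvectors of $B$ with the orthogonal complement of the first $i-1$ eigenvectors of $A$ (nonempty by a dimension count), and bounds a single Rayleigh quotient; that version has the advantage of also yielding the standard characterization of when equality holds (tight interlacing), which Courant--Fischer does not hand you as directly. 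Since the present paper only needs the inequalities, your route is entirely adequate.
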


\medskip
If we take $S=[\, I\ \ O\,]$, then $B$ is just a principal submatrix of $A$ and
we have:

\medskip
\begin{coro}\label{coro:2.2Haemerspaper}
If $B$ is a principal submatrix of a symmetric matrix $A$,
then the eigenvalues of $B$ interlace the eigenvalues of $A$.
\end{coro}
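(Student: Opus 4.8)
The plan is to produce an explicit orthonormal matrix $S$ and then invoke Lemma \ref{theo:interlacing} directly, following the suggestion made immediately before the statement. Suppose $B$ is the principal submatrix of $A$ obtained by retaining the rows and columns indexed by a set $J\subseteq\{1,\dots,n\}$ with $|J|=m$.

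First I would reduce to the case $J=\{1,\dots,m\}$, i.e.\ the case where $B$ is the \emph{leading} principal submatrix. This reduction is justified because relabelling the indices amounts to conjugating $A$ by a permutation matrix $P$: the matrix $P^{\top}AP$ is again symmetric, has exactly the same eigenvalues as $A$ (since $P$ is orthogonal, so this is a similarity transformation), and moves the block indexed by $J$ into the leading $m\times m$ position. Hence it is enough to establish the claim for the leading principal submatrix. With this normalization I would take $S$ to be the $n\times m$ matrix $\left(\begin{smallmatrix} I \\ O \end{smallmatrix}\right)$ whose top block is the $m\times m$ identity and whose bottom block is the $(n-m)\times m$ zero matrix. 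A one-line computation gives $S^{\top}S=I$, so the columns of $S$ are orthonormal, and a block multiplication shows that $S^{\top}AS$ is precisely the leading $m\times m$ block of $A$, namely $B$.

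At this point the hypotheses of Lemma \ref{theo:interlacing} are satisfied with this choice of $S$, so its conclusion gives $\lambda_{i}\geq \mu_{i}\geq \lambda_{n-m+i}$ for $1\le i\le m$, where $\mu_{1}\geq\cdots\geq\mu_{m}$ are the eigenvalues of $B$; this is exactly the asserted interlacing. I do not expect any genuinely hard step here: the corollary is essentially a specialization of the lemma. The only point that requires a word of care is the reduction to contiguous indices, and even that is routine once one observes that conjugation by a permutation matrix preserves the spectrum and turns an arbitrary principal submatrix into a leading one.
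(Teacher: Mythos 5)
Your proposal is correct and follows essentially the same route as the paper, which likewise obtains the corollary by taking $S$ to be the $n\times m$ block matrix with $I$ on top and $O$ below and applying Lemma \ref{theo:interlacing}. The only addition is your explicit permutation-conjugation reduction to the leading principal submatrix, which the paper leaves implicit but which is routine and harmless.
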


%%%%%%%%%%%%%%%%%%%%%%%%%%%%%%%%%%%%%%%%%%%%%%%%
\section{Generalized inertia bound}\label{section:generalizedinetiabound}
%%%%%%%%%%%%%%%%%%%%%%%%%%%%%%%%%%%%%%%%%%%%%%%%

Cvetkovi\'{c} \cite{C1972} (see also \cite[p.39]{BH2012} or \cite[p.205]{GR2001}) obtained the following upper bound for the independence number.
\begin{theo} [Cvetkovi\'{c}'s inertia bound]\label{theorem:Cvetkovicbound}
If $G$ is a graph, then
\begin{equation}\label{eq:Cvetkovicbound}
\alpha(G)\le \min\{|i:\lambda_i\ge 0|,|i:\lambda_i\le 0|\}.
\end{equation}
\end{theo}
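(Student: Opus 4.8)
The plan is to prove Cvetkovi\'{c}'s inertia bound by exhibiting a suitable principal submatrix of the adjacency matrix $A$ and applying the interlacing results from Section~\ref{preliminaries}. Let $W$ be a maximum independent set in $G$, so $|W| = \alpha(G) =: \alpha$. Since no two vertices of $W$ are adjacent, the principal submatrix $B$ of $A$ indexed by the vertices of $W$ is the $\alpha \times \alpha$ zero matrix, and hence all $\alpha$ of its eigenvalues equal $0$. First I would invoke Corollary~\ref{coro:2.2Haemerspaper} to conclude that the eigenvalues of this zero matrix interlace those of $A$.

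The key step is to convert the interlacing inequalities into counts of nonnegative and nonpositive eigenvalues of $A$. Writing $B$'s eigenvalues as $\mu_1 = \cdots = \mu_\alpha = 0$ and $A$'s as $\lambda_1 \ge \cdots \ge \lambda_n$, interlacing gives $\lambda_i \ge \mu_i \ge \lambda_{n-\alpha+i}$ for $1 \le i \le \alpha$. The right-hand inequality with $i=\alpha$ yields $\lambda_n \le \mu_\alpha = 0$ for the bottom, but more usefully the left inequality $\lambda_i \ge \mu_i = 0$ for each $i \le \alpha$ shows that $A$ has at least $\alpha$ eigenvalues that are $\ge 0$, whence $\alpha \le |\{i : \lambda_i \ge 0\}|$. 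Symmetrically, the inequality $\mu_i \ge \lambda_{n-\alpha+i}$, i.e.\ $0 \ge \lambda_{n-\alpha+i}$ for $1 \le i \le \alpha$, shows that the $\alpha$ bottom eigenvalues $\lambda_{n-\alpha+1}, \ldots, \lambda_n$ are all $\le 0$, so $\alpha \le |\{i : \lambda_i \le 0\}|$. Taking the minimum of these two bounds gives exactly \eqref{eq:Cvetkovicbound}.

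I do not anticipate a genuine obstacle here, since the argument is a direct application of Corollary~\ref{coro:2.2Haemerspaper}; the only point requiring a little care is the bookkeeping that translates the two-sided interlacing inequalities into the two separate eigenvalue counts, making sure the index ranges line up so that one genuinely obtains $\alpha$ nonnegative eigenvalues from one side and $\alpha$ nonpositive eigenvalues from the other. An alternative, essentially equivalent route would phrase the argument in terms of the inertia of $A$ via Sylvester-type reasoning, observing that a principal submatrix of order $\alpha$ with inertia $(0, \alpha, 0)$ forces both the positive-plus-zero count and the negative-plus-zero count of $A$ to be at least $\alpha$; but the interlacing formulation above is cleaner given the tools already developed in the excerpt, so that is the route I would follow.
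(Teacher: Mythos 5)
Your proof is correct and follows essentially the same route as the paper: the paper does not prove Theorem~\ref{theorem:Cvetkovicbound} separately but recovers it as the $k=1$ case of Theorem~\ref{theo:kindependencenumberbound}, whose proof is exactly your argument --- the principal submatrix of $A$ indexed by an independent set is the zero matrix, and Corollary~\ref{coro:2.2Haemerspaper} gives the two eigenvalue counts. Your bookkeeping of the interlacing inequalities is accurate, so nothing further is needed.
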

%The inertia bound can be restated as $\alpha (G)\leq p_{0}+\min (p_{-},p_{+})$, where $p_{-},p_{0},p_{+}$ denote the number of eigenvalues, smaller, equal and greater than zero in the spectrum of $G$.

Let $w_k(G)=\min_i (A^k)_{ii}$ be the minimum number of closed walks of length $k$ where the minimum is taken over all the vertices of $G$. Similarly, let $W_k(G)=\max_i (A^k)_{ii}$ be the maximum number of closed walks of length $k$ where the maximum is taken over all the vertices of $G$. Our first main theorem generalizes Cvetkovi\'{c}'s inertia bound which can be recovered when $k=1$.
\begin{theo}\label{theo:kindependencenumberbound}
Let $G$ be a graph on $n$ vertices. Then,
\begin{equation}\label{kindependencenumberbound}
\alpha_{k}(G)\le |\{i:\lambda_i^k\ge w_{k}(G)\}| \quad \text{and} \quad \alpha_{k}(G)\le |\{i:\lambda_i^k\le W_{k}(G)\}|.
\end{equation}
\end{theo}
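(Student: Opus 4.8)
The plan is to reduce the statement about $\alpha_k(G)$ to an application of Cauchy interlacing (Corollary~\ref{coro:2.2Haemerspaper}) on a suitable principal submatrix of $A^k$. The key observation is that if $U=\{u_1,\dots,u_r\}$ is a set of $r=\alpha_k(G)$ vertices that are mutually at distance greater than $k$, then for any two distinct $u_i,u_j\in U$ there is no walk of length $\le k$ between them, and in particular no walk of length exactly $k$. Hence the entry $(A^k)_{u_iu_j}=0$ whenever $i\neq j$. Therefore the principal submatrix $B$ of $A^k$ indexed by the vertices of $U$ is an $r\times r$ \emph{diagonal} matrix whose diagonal entries are the values $(A^k)_{u_iu_i}$, i.e.\ the numbers of closed walks of length $k$ based at the vertices of $U$.

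First I would record that the eigenvalues of $A^k$ are exactly $\lambda_1^k,\dots,\lambda_n^k$, since $A$ is symmetric and diagonalizes as $A=Q\,\diag(\lambda_1,\dots,\lambda_n)\,Q^{\top}$, so $A^k=Q\,\diag(\lambda_1^k,\dots,\lambda_n^k)\,Q^{\top}$. Next I would apply Corollary~\ref{coro:2.2Haemerspaper} to the pair $(A^k,B)$: the $r$ eigenvalues of $B$ interlace the $n$ eigenvalues of $A^k$. Since $B$ is diagonal, its eigenvalues are precisely its diagonal entries $(A^k)_{u_iu_i}$, each of which lies in the interval $[w_k(G),W_k(G)]$ by the definitions of $w_k$ and $W_k$ as the minimum and maximum diagonal entries of $A^k$ over all vertices.

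For the first bound, interlacing gives $\mu_i\ge \lambda_{n-r+i}^k$ for $1\le i\le r$; equivalently, reading from the top, the $i$-th largest eigenvalue $\lambda_i^k$ of $A^k$ is at least the $i$-th largest eigenvalue $\mu_i$ of $B$. Since every $\mu_i\ge w_k(G)$, the $r$ largest eigenvalues of $A^k$ all satisfy $\lambda_i^k\ge w_k(G)$, which forces $|\{i:\lambda_i^k\ge w_k(G)\}|\ge r=\alpha_k(G)$. The second bound is symmetric: applying the other half of the interlacing inequalities (or replacing $A^k$ by $-A^k$ and $B$ by $-B$), every $\mu_i\le W_k(G)$ shows that the $r$ smallest eigenvalues of $A^k$ satisfy $\lambda_i^k\le W_k(G)$, giving $|\{i:\lambda_i^k\le W_k(G)\}|\ge \alpha_k(G)$.

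The main conceptual step, and the one I would state most carefully, is the claim that the off-diagonal entries of the submatrix vanish: this is exactly the translation of ``pairwise distance greater than $k$'' into ``no walk of length $k$,'' hence a zero $(A^k)_{u_iu_j}$ entry, which makes $B$ diagonal. Everything after that is a mechanical invocation of interlacing together with the definitions of $w_k(G)$ and $W_k(G)$. I do not anticipate a genuine obstacle; the only point requiring care is bookkeeping the direction of the interlacing inequalities so that each of the two displayed bounds is matched with the correct half ($\lambda_i\ge\mu_i$ versus $\mu_i\ge\lambda_{n-r+i}$) of Lemma~\ref{theo:interlacing}.
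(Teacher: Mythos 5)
Your proposal is correct and follows essentially the same route as the paper: extract the principal submatrix of $A^k$ indexed by a maximum $k$-independent set, observe it is diagonal with entries between $w_k(G)$ and $W_k(G)$, and apply Corollary \ref{coro:2.2Haemerspaper}. You simply spell out the interlacing bookkeeping that the paper leaves implicit.
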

\begin{proof}
Because $G$ has a $k$-independent set
$U$ of size $\alpha_k$, the matrix $A^{k}$
has a principal submatrix (with rows and columns corresponding to the vertices of $U$)
whose off-diagonal entries are $0$ and whose diagonal entries equal the number of closed walks of length $k$ starting at vertices of $U$. Corollary \ref{coro:2.2Haemerspaper} leads to the desired conclusion.
\end{proof}

We now describe why the above theorem can be considered a spectral result in nature. Our bounds are functions of the eigenvalues of $A$ and of certain counts of closed walks in $G$. We will now explain briefly how one may count closed walks in $G$ using only the eigenvalues and eigenvectors of $A$ (cf \cite{CDS} Section 1.8 or \cite{FGY1996}). Let $x_1,\cdots ,x_n$ be an orthonormal basis of eigenvectors for $A$ and let $e_1,\cdots , e_n$ be standard basis vectors. Then the number of closed walks of length $j$ from a vertex $v$ is given by
\[
e_v^T A^j e_v = \left( \sum_{i=1}^n \langle e_v, x_i\rangle x_i\right)^T A^j \left(\sum_{i=1}^n \langle e_v, x_i \rangle x_i \right) = \sum_{i=1}^n \langle e_v, x_i\rangle^2 \lambda_i^j.
\]
Therefore, one may apply our bounds with knowledge only of the eigenvalues and eigenvectors for $A$.  In fact, we note that one does not need to compute the eigenvectors of $A$ to count walks in $G$. It is enough to compute the idempotents which are given in terms of a polynomial in $A$ and the eigenvalues of $A$ \cite{fiol}.

%We say that a graph $G$ is $h$-punctually distance-polynomial for an integer $h\leq D$, if $A_h\in \{q(A):q\in \mathbb{R}_d[x]\}$; that is, there exists a polynomial $q_h \in \mathbb{R}_d[x]$ such that $q_h(A)=A_h$. Note that $\text{deg }q_h\geq h$. If $\text{deg }q_h =h$, we call the graph
%$h$-punctually distance-regular \cite{DDFGG2011}. Note that, since $A_0=I$ and $A_1=A$, every graph is $0$-punctually distance-regular ($q_0=1$) and
%$1$-punctually distance-regular ($q_1=x$).

%One advantage of our bound from Theorem \ref{theo:kindependencenumberbound} is that it only depends on the spectrum of the graph. That is, it is not required to know the spectrum of the distance-$k$ graph $G_{k}$. In such a case, we could apply the inertia bound from Theorem \ref{theorem:Cvetkovicbound}. This is the case, for
%instance, when $G$ is punctually distance-regular, since then
%$A_{k}=p_{k}(A)$, or more generally, if $G$ is
%$k$-punctually distance-polynomial.

\subsection{Construction attaining equality}\label{equality}

In this section, we describe a set of graphs for which Theorem \ref{theo:kindependencenumberbound} is tight. For $k,m \geq 1$ we will construct a graph $G$ with $\alpha_{2k+2}(G) = \alpha_{2k+3}(G) = m$.

Le $H$ be the graph obtained from the complete graph $K_n$ by removing one edge. The eigenvalues of $H$ are $\frac{n-3\pm \sqrt{(n+1)^2-8}}{2}, 0$ (each with multiplicity $1$), and $-1$ with multiplicity $n-3$. This implies $|\lambda_i(H)|<2$ for $i>1$.

Let $H_1,...,H_m$ be vertex disjoint copies of $H$ with $u_i, v_i\in V(H_i)$ and $u_i\not \sim v_i$ for $1\leq i\leq m$. Let $x$ be a new vertex. For each $1\leq i\leq m$, create a path of length $k$ with $x$ as one endpoint and $u_i$ as the other. Let $G$ be the resulting graph which has $nm + (k-2)m + 1$ vertices with $m\left( \binom{n}{2} -1\right) + mk$ edges.

Because the distance between any distinct $v_i$s is $2k+4$, we get that
\begin{equation}\label{extremal_lowerbound}
\alpha_{2k+2}(G) \geq \alpha_{2k+3}(G) \geq m.
\end{equation}

We will use Theorem \ref{theo:kindependencenumberbound} to show that equality occurs in \eqref{extremal_lowerbound} for $n$ large enough.

Starting from any vertex of $G$, one can find a closed walk of length $2k+2$ or $2k+3$ that contains an edge of some $H_i$. Therefore, $w_{2k+2}(G)\geq n-2$ and $w_{2k+3}(G)\geq n-2$.
Choose $n$ so that $n-2>\left(\sqrt{m} + 4\right)^{2k+3}$. If we can show that
\begin{equation}\label{eigenvalue bound for construction}
|\lambda_i(G)| \leq \sqrt{m}+4
\end{equation}
 for all $i>m$, then Theorem \ref{theo:kindependencenumberbound} will imply that $\alpha_{2k+3}(G) \leq \alpha_{2k+2}(G) \leq m$ and we are done. To show \eqref{eigenvalue bound for construction}, note that the edge-set of $G$ is the union of $m$ edge disjoint copies of $H$, the star $K_{1,m}$, and $m$ vertex disjoint copies of $P_{k-1}$. Since the star $K_{1,m}$ has spectral radius $\sqrt{m}$ and a disjoint union of paths has spectral radius less than $2$, applying the Courant-Weyl inequalities again, along with the triangle inequality, proves \eqref{eigenvalue bound for construction} and shows the tightness of our examples.

%%%%%%%%%%%%%%%%%%%%%%%%%%%%%%%%%%%%%%%%%%%%%%%%
\section{Generalized Hoffman bound} \label{section:hoffmanratiobound}
%%%%%%%%%%%%%%%%%%%%%%%%%%%%%%%%%%%%%%%%%%%%%%%%

The following bound on the independence number is an unpublished result of Hoffman known as the Hoffman's ratio bound (see \cite[p.39]{BH2012} or \cite[p.204]{GR2001}).

\begin{theo}[Hoffman ratio bound]
If $G$ is regular then \[\alpha (G)\leq n\frac{-\lambda_n}{\lambda_1-\lambda_n},\] and if a coclique $C$ meets this bound then every vertex not in $C$ is adjacent to precisely $-\lambda_n$ vertices of $C$.
\end{theo}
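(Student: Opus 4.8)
The plan is to reduce everything to the characteristic vector of the coclique together with the spectral decomposition of $A$. Since $G$ is regular of degree $\lambda_1$, the all-ones vector $\j$ is an eigenvector of $A$ for the eigenvalue $\lambda_1$; fix an orthonormal eigenbasis $v_1=\j/\sqrt n, v_2,\dots,v_n$ with $Av_i=\lambda_i v_i$. Let $C$ be a coclique with $|C|=\alpha$ and let $\x$ be its $0/1$ characteristic vector, so $\|\x\|^2=\alpha$ and, because $C$ induces no edges, $\x^{\top}A\x=0$. I would first record the two facts that drive the argument: writing $\x=\sum_i c_i v_i$, the component along $v_1$ is forced by regularity to be $c_1=\langle \x, v_1\rangle=\alpha/\sqrt n$, while $\sum_i c_i^2=\alpha$.

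For the inequality itself, I would expand the (vanishing) Rayleigh quotient and bound the tail eigenvalues below by $\lambda_n$:
\[
0=\x^{\top}A\x=\sum_{i}\lambda_i c_i^2
=\lambda_1 c_1^2+\sum_{i\ge 2}\lambda_i c_i^2
\ge \lambda_1\frac{\alpha^2}{n}+\lambda_n\Big(\alpha-\frac{\alpha^2}{n}\Big).
\]
Dividing by $\alpha>0$ and rearranging gives $\tfrac{\alpha}{n}(\lambda_1-\lambda_n)\le -\lambda_n$, which is exactly $\alpha\le n\,\tfrac{-\lambda_n}{\lambda_1-\lambda_n}$. An equivalent route, closer in spirit to the rest of the paper, applies Lemma~\ref{theo:interlacing} to the normalized $n\times 2$ partition matrix $S$ for the split $(C,V\setminus C)$: the quotient $B=S^{\top}A S$ has $\lambda_1$ as one eigenvalue and $-\alpha\lambda_1/(n-\alpha)$ as the other, and interlacing forces the latter to be $\ge \lambda_n$, yielding the same bound.

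The part I expect to be the real work is the equality characterization. Equality in the bound forces equality in the estimate above, i.e. $\sum_{i\ge 2}(\lambda_i-\lambda_n)c_i^2=0$; since every summand is nonnegative, $c_i=0$ for all $i\ge 2$ with $\lambda_i>\lambda_n$. Hence $\x=c_1 v_1+w$, where $w$ lies entirely in the $\lambda_n$-eigenspace, and therefore
\[
A\x=\lambda_1 c_1 v_1+\lambda_n w=\lambda_n\x+(\lambda_1-\lambda_n)\frac{\alpha}{n}\,\j.
\]
Reading off the coordinate at a vertex $u\notin C$, where $\x_u=0$ and $(A\x)_u$ counts the neighbours of $u$ inside $C$, gives $(A\x)_u=(\lambda_1-\lambda_n)\alpha/n$, and substituting the equality value $\alpha/n=-\lambda_n/(\lambda_1-\lambda_n)$ makes this exactly $-\lambda_n$. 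The only delicate point is justifying that $w$ sits in the bottom eigenspace (equivalently, that the interlacing is tight and the partition equitable); once that localization is in hand, the structural conclusion drops out from a single coordinate evaluation.
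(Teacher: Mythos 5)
The paper does not actually prove this statement: it is quoted as an unpublished result of Hoffman with pointers to \cite[p.~39]{BH2012} and \cite[p.~204]{GR2001}, so there is no in-paper proof to measure you against. Your argument is correct and complete, and it is essentially the classical proof from those references. The inequality step is right: with $x$ the characteristic vector of the coclique, $x^{\top}Ax=0$, $c_1=\alpha/\sqrt n$, $\sum_i c_i^2=\alpha$, and bounding $\lambda_i\ge\lambda_n$ for $i\ge2$ gives $0\ge\lambda_1\alpha^2/n+\lambda_n(\alpha-\alpha^2/n)$, hence the bound (you implicitly need $G$ to have at least one edge so that $\lambda_n<0<\lambda_1$ and the denominator is positive; worth saying). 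For the equality case, the ``delicate point'' you flag is in fact already settled by your own computation: equality forces $\sum_{i\ge2}(\lambda_i-\lambda_n)c_i^2=0$, and since each summand is nonnegative this localizes $x-c_1v_1$ in the $\lambda_n$-eigenspace with no further appeal to tight interlacing or equitability needed; evaluating $Ax=\lambda_n x+(\lambda_1-\lambda_n)(\alpha/n)\mathbf{1}$ at a vertex outside $C$ then gives exactly $-\lambda_n$ neighbours in $C$. It is worth noting how this sits relative to what the paper does prove: its generalization (Theorem~\ref{hoffman theorem}) is derived from the $k$-Expander Mixing Lemma, which for $k=1$ and $S=T=C$ is precisely your Rayleigh-quotient computation applied with the weaker constant $\lambda=\max\{|\lambda_2|,|\lambda_n|\}$ in place of $-\lambda_n$; your direct version is therefore slightly sharper for $k=1$ and, unlike the mixing-lemma route, also delivers the equality characterization.
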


Let $G$ be a $d$-regular graph on $n$ vertices having an adjacency matrix $A$ with eigenvalues $d = \lambda_1 \geq \lambda_2 \geq \cdots \geq \lambda_n \geq -d$. Let $\lambda = \mathrm{max}\{|\lambda_2|, |\lambda_n|\}$. We use Alon's notation and say that $G$ is an $(n,d,\lambda)$-graph (see also \cite[p.19]{KS}). Let $\tilde{W}_k=\max_i \sum_{j=1}^{k} (A^j)_{ii}$ be the maximum over all vertices of the number of closed walks of length at most $k$. Our second theorem is an extension of the Hoffman bound to $k$-independent sets.

\begin{theo}\label{hoffman theorem}
Let $G$ be an $(n,d,\lambda)$-graph and $k$ a natural number. Then
\begin{equation}\label{kindependencenumberboundTAIT}
\alpha_k (G) \leq n\frac{\tilde{W}_k+\sum_{j=1}^k \lambda^j}{\sum_{j=1}^k d^j + \sum_{j=1}^k \lambda^j}.
\end{equation}
\end{theo}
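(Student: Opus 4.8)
The plan is to carry out the quadratic-form argument that underlies both the Hoffman ratio bound and the Expander-Mixing lemma, but applied to the matrix $M=\sum_{j=1}^k A^j$ instead of to $A$ itself. Fix a $k$-independent set $S$ with $|S|=\alpha_k(G)$ and let $\chi_S\in\{0,1\}^n$ be its characteristic vector. The whole proof reduces to estimating the scalar $\chi_S^{\top}M\chi_S$ in two ways: a combinatorial upper bound coming from the definition of a $k$-independent set, and a spectral lower bound coming from the eigenvalues of $M$. Comparing the two estimates and solving for $\alpha_k$ will yield \eqref{kindependencenumberboundTAIT}.

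For the upper bound I would use that any two distinct $u,v\in S$ are at distance greater than $k$, so there is no walk of length at most $k$ joining them and hence $(A^j)_{uv}=0$ for every $1\le j\le k$. Thus all off-diagonal entries of $M$ indexed by $S$ vanish, and $\chi_S^{\top}M\chi_S=\sum_{u\in S}\sum_{j=1}^k (A^j)_{uu}\le \alpha_k\tilde{W}_k$ straight from the definition of $\tilde{W}_k$. For the lower bound I would take an orthonormal eigenbasis $x_1,\dots,x_n$ of $A$, so that $M$ has eigenvalues $\mu_i=\sum_{j=1}^k\lambda_i^j$ with $\mu_1=\sum_{j=1}^k d^j$ realized by $x_1=\tfrac{1}{\sqrt n}\j$ (here $d$-regularity is essential). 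Expanding $\chi_S=\sum_i c_ix_i$ gives $\chi_S^{\top}M\chi_S=\sum_i\mu_ic_i^2$, where $c_1^2=\alpha_k^2/n$ and $\sum_{i\ge 2}c_i^2=\alpha_k-\alpha_k^2/n$; I would then keep the $i=1$ term exactly and bound the remaining terms below by the worst non-principal eigenvalue of $M$.

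The crux — and the only place the $(n,d,\lambda)$-structure enters — is controlling those non-principal eigenvalues. Since the signs of the individual powers $\lambda_i^j$ are uncontrolled, the natural device is the triangle inequality: for $i\ge 2$ we have $|\mu_i|\le\sum_{j=1}^k|\lambda_i|^j\le\sum_{j=1}^k\lambda^j$, so $\mu_i\ge-\sum_{j=1}^k\lambda^j$. This gives $\chi_S^{\top}M\chi_S\ge \tfrac{\alpha_k^2}{n}\sum_{j=1}^k d^j-\big(\sum_{j=1}^k\lambda^j\big)\big(\alpha_k-\tfrac{\alpha_k^2}{n}\big)$. Combining with the upper bound, dividing by $\alpha_k$, and rearranging the resulting linear inequality in $\alpha_k/n$ (noting $\sum_j d^j+\sum_j\lambda^j>0$) produces exactly \eqref{kindependencenumberboundTAIT}. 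I expect this triangle-inequality step to be essentially the only source of slack in the argument: replacing the vector $(\lambda_i^j)_{j}$ by the worst-case quantity $\sum_j\lambda^j$ discards all sign cancellation, which is presumably why equality is hard to pin down, consistent with the remark in the introduction that tightness of Theorem \ref{hoffman theorem} remains open.
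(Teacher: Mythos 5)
Your proposal is correct and is essentially the paper's argument: the paper factors the same computation through a general two-set ``$k$-Expander Mixing Lemma'' for $W_k(S,T)=\mathbf{1}_S^{\top}\bigl(\sum_{j=1}^k A^j\bigr)\mathbf{1}_T$ (using Cauchy--Schwarz for $S\neq T$) and then specializes to $S=T$, whereas you carry out the quadratic-form estimate for $S=T$ directly, but the eigenbasis expansion, the triangle-inequality bound $\bigl|\sum_{j=1}^k\lambda_i^j\bigr|\le\lambda^{(k)}$ for $i\ge 2$, the combinatorial bound $\chi_S^{\top}M\chi_S\le\alpha_k\tilde{W}_k$, and the final algebra are identical. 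No gaps.
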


The proof of Theorem \ref{hoffman theorem} will be given as a corollary of a type of Expander-Mixing Lemma (cf \cite{AC1988}). For $k$ a natural number, denote
\[
\lambda^{(k)} = \lambda + \lambda^2 + \cdots + \lambda^k,
\]
and
\[
d^{(k)} = d + d^2 + \cdots + d^k.
\]

\begin{theo}[$k$-Expander Mixing Lemma]\label{k expander mixing lemma}
Let $G$ be an $(n,d,\lambda)$-graph. For $S, T\subseteq G$ let $W_k(S,T)$ be the number of walks of length at most $k$ with one endpoint in $S$ and one endpoint in $T$. Then for any $S,T\subseteq V$, we have
\begin{equation*}
\left|W_k(S,T) - \frac{d^{(k)}|S||T|}{n}\right| \leq \lambda^{(k)}\sqrt{ |S||T| \left(1 - \frac{|S|}{n}\right)\left(1 - \frac{|T|}{n}\right)} < \lambda^{(k)} \sqrt{|S||T|}.
\end{equation*}
\end{theo}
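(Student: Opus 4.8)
The plan is to adapt the classical proof of the Expander Mixing Lemma, replacing the single adjacency matrix $A$ by the matrix $M := \sum_{j=1}^{k} A^j$, which records walks of every length between $1$ and $k$ simultaneously. First I would write down the basic counting identity. If $\mathbf{1}_S$ and $\mathbf{1}_T$ denote the characteristic ($0/1$) vectors of $S$ and $T$, then since $(A^j)_{uv}$ is the number of walks of length exactly $j$ from $u$ to $v$, the number of walks of length exactly $j$ with one endpoint in $S$ and one in $T$ is $\mathbf{1}_S^{\top} A^j \mathbf{1}_T$. Summing over $1 \le j \le k$ gives the exact expression
\[
W_k(S,T) = \mathbf{1}_S^{\top} M \mathbf{1}_T, \qquad M = \sum_{j=1}^{k} A^j.
\]

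Next I would diagonalize, using the same spectral decomposition of walks already described after Theorem \ref{theo:kindependencenumberbound}. Let $x_1,\dots,x_n$ be an orthonormal eigenbasis with $A x_i = \lambda_i x_i$; since $G$ is $d$-regular we may take $x_1 = \tfrac{1}{\sqrt{n}}\mathbf{1}$. Then $M = \sum_{i=1}^{n}\big(\sum_{j=1}^{k}\lambda_i^j\big) x_i x_i^{\top}$, and expanding the quadratic form yields
\[
W_k(S,T) = \sum_{i=1}^{n} \Big(\sum_{j=1}^{k}\lambda_i^j\Big)(\mathbf{1}_S^{\top} x_i)(x_i^{\top}\mathbf{1}_T).
\]
The $i=1$ term contributes exactly $d^{(k)}\,\tfrac{|S|}{\sqrt{n}}\tfrac{|T|}{\sqrt{n}} = d^{(k)}|S||T|/n$, which is precisely the main term to be subtracted, so the deviation $W_k(S,T) - d^{(k)}|S||T|/n$ equals the sum of the terms with $i \ge 2$.

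The remaining two steps are the eigenvalue estimate and the Cauchy--Schwarz bookkeeping. For $i \ge 2$ I would bound the coefficient by the triangle inequality, $\big|\sum_{j=1}^{k}\lambda_i^j\big| \le \sum_{j=1}^{k}|\lambda_i|^j \le \sum_{j=1}^{k}\lambda^j = \lambda^{(k)}$, using that $|\lambda_i| \le \lambda$ for all indices $i \ge 2$. Pulling the uniform factor $\lambda^{(k)}$ out of the sum and applying Cauchy--Schwarz to $\sum_{i \ge 2}|\mathbf{1}_S^{\top} x_i|\,|x_i^{\top}\mathbf{1}_T|$ reduces the problem to evaluating $\sum_{i \ge 2}(\mathbf{1}_S^{\top} x_i)^2$ and $\sum_{i \ge 2}(x_i^{\top}\mathbf{1}_T)^2$. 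By Parseval's identity the full sums over $i$ equal $\|\mathbf{1}_S\|^2 = |S|$ and $|T|$, so subtracting the $i=1$ contributions $|S|^2/n$ and $|T|^2/n$ gives $|S|(1-|S|/n)$ and $|T|(1-|T|/n)$. This produces the stated middle inequality, and the final strict inequality is immediate from $1-|S|/n < 1$ and $1-|T|/n < 1$ whenever $S$ and $T$ are nonempty.

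I do not anticipate a serious obstacle, as the argument is a faithful generalization of the single-matrix lemma; the one point requiring care is the coefficient bound $\big|\sum_{j=1}^{k}\lambda_i^j\big| \le \lambda^{(k)}$. Here it is essential that $\lambda$ dominates $|\lambda_i|$ for every $i \ge 2$ at once and that passing to absolute values term by term still gives a bound uniform in $i$. This is exactly why the lemma is phrased with $\lambda^{(k)} = \lambda + \cdots + \lambda^k$ rather than a single power of $\lambda$: taking absolute values of a sum of signed powers can only be controlled by the sum of the absolute powers, and the eigenvalues $\lambda_i$ (for $i \ge 2$) may well be negative.
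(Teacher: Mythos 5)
Your proposal is correct and follows essentially the same route as the paper's proof: express $W_k(S,T)$ as $\mathbf{1}_S^{\top}\bigl(\sum_{j=1}^k A^j\bigr)\mathbf{1}_T$, expand in an orthonormal eigenbasis with $x_1 = \tfrac{1}{\sqrt{n}}\mathbf{1}$, isolate the $i=1$ term as $d^{(k)}|S||T|/n$, and bound the remainder via $|\lambda_i|\le\lambda$ and Cauchy--Schwarz. Your bookkeeping of $\sum_{i\ge 2}(\mathbf{1}_S^{\top}x_i)^2 = |S|\bigl(1-\tfrac{|S|}{n}\bigr)$ is in fact slightly more careful than the paper's displayed formulas, which contain a harmless typo ($n^2$ in place of $n$ in the denominator).
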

\begin{proof}
Let $S,T\subset V(G)$ and let $\mathbf{1}_S$ and $\mathbf{1}_T$ be the characteristic vectors for $S$ and $T$ respectively.  Then
\[
W_k(S,T) = \mathbf{1}_S^t \left(\sum_{j=1}^k A^j\right) \mathbf{1}_T.
\]

Let $x_1,...,x_n$ be an orthonormal basis of eigenvectors for $A$. Then $\mathbf{1}_S = \sum_{i=1}^n \alpha_i x_i$ and $\mathbf{1}_T = \sum_{i=1}^n \beta_i x_i$, where $\alpha_i = \langle\mathbf{1}_S, x_i\rangle$ and $\beta_i = \langle\mathbf{1}_T, x_i\rangle$. Note that $\sum \alpha_i^2 = \langle\mathbf{1}_S, \mathbf{1}_S\rangle = |S|$ and similarly, $\sum \beta_i^2 = |T|$. Because $G$ is $d$-regular, we get that $x_1 = \frac{1}{\sqrt{n}} \mathbf{1}$ and so $\alpha_1 = \frac{|S|}{n}$ and $\beta_1 = \frac{|T|}{n}$.  Now, since $i\not=j$ implies $\langle x_i, x_j \rangle = 0$, we have

\begin{align*}
W_k (S,T) &= \left(\sum_{i=1}^n \alpha_i x_i \right)^t \left(\sum_{j=1}^k A^j\right) \left(\sum_{i=1}^n \beta_i x_i\right) \\
&= \sum_{i,j} (\alpha_i x_i)((\beta_j(\lambda_j + \lambda_j^2 + \cdots + \lambda_j^k) x_j) \\
&= \sum_{i=1}^n (\lambda_i + \lambda_i^2 + \cdots + \lambda_i^k)\alpha_i \beta_i \\
&=  \frac{d^{(k)}}{n}  |S||T| + \sum_{i=2}^n (\lambda_i + \lambda_i^2 + \cdots + \lambda_i^k)\alpha_i \beta_i
\end{align*}

Therefore, we have
\begin{align*}
\left| W_k(S,T) - \frac{d^{(k)}}{n}|S||T| \right| &= \left|\sum_{i=2}^n (\lambda_i + \lambda_i^2 + \cdots + \lambda_i^k)\alpha_i \beta_i\right|\\
&\leq \lambda^{(k)} \sum_{i=2}^n |\alpha_i \beta_i| \\
&\leq \lambda^{(k)} \left(\sum_{i=2}^n \alpha_i^2\right)^{1/2} \left(\sum_{i=2}^n \beta_i^2\right)^{1/2},
\end{align*}
where the last inequality is by Cauchy-Schwarz. Now since
\[
\sum_{i=2}^n \alpha_i^2 = |S| - \frac{|S|^2}{n^{2}}
\]
and
\[
\sum_{i=2}^n \beta_i^2 = |T| - \frac{|T|^2}{n^{2}},
\]
we have the result.
\end{proof}

Now we are ready to prove the bound of Theorem \ref{hoffman theorem}.

\begin{proof}[Proof of Theorem \ref{hoffman theorem}]
Let $S$ be a $k$-independent set in $G$ with $|S| = \alpha_k(G)$, and let $W_k(S,S)$ be equal to the number of closed walks of length at most $k$ starting in $S$. Theorem \ref{k expander mixing lemma} gives
\[
\frac{d^{(k)} |S|^2}{n} - W_k(S,S) \leq \lambda^{(k)} |S| \left(1 - \frac{|S|}{n}\right).
\]
Recalling that $\tilde{W}_k  = \mathrm{max}_i \sum_{j=1}^k (A^j)_{ii}$, we have $W_k(S,S) \leq |S| \tilde{W}_k$. This yields
\[
\frac{d^{(k)} |S|}{n} - \tilde{W}_k \leq \lambda^{(k)} \left(1- \frac{|S|}{n}\right).
\]
Solving for $|S|$ and substituting $|S| = \alpha_k$ gives
\[
\alpha_k \leq n\frac{\tilde{W}_k + \lambda^{(k)}}{d^{(k)} + \lambda^{(k)}}.
\]
\end{proof}

Note that the bound from Theorem \ref{hoffman theorem} behaves nicely if $\tilde{W}_k$ and $\lambda^{(k)}$ are small with respect of $d^{(k)}$. It is easy to see that $\tilde{W}_k\leq \frac{d^{k}-1}{d-1}$ (we expand $d$ in each step but in the last step we do not have any freedom since we assume that we are counting closed walks). Since $G$ is $d$-regular and we know that $\tilde{W}_k\leq d^{k-1}$, the above bound performs well for graphs with a good spectral gap.

\section{Concluding Remarks}\label{conclusion}

In this section, we note how our theorems compare with previous upper bounds on $\alpha_k$. Our generalized Hoffman bound for $\alpha_k$ is best compared with Firby and Haviland \cite{FH1997}, who proved that
if $G$ is a connected graph of order $n\geq 2$ then
\begin{equation}\label{Firby bound}
\alpha_k (G) \leq \frac{2(n-\epsilon)}{k+2-\epsilon}
\end{equation}
where $\epsilon \equiv k \pmod{2}$. If $d$ is large compared to $k$ and $\lambda = o(d)$, then Theorem \ref{hoffman theorem} is much better than \eqref{Firby bound} (this may be expected, as we have used much more information than is necessary for \eqref{Firby bound}). We note that almost all $d$-regular graphs have $\lambda = o(d)$ as $d\to\infty$.

In \cite{F1997}, Fiol (improving work from \cite{FG1998}) obtained the bound
\begin{equation}\label{Fiolbound}
\alpha_k (G) \leq \frac{2n}{P_k(\lambda_1)},
\end{equation}
when $G$ is a regular graph (later generalized to nonregular graphs in \cite{F1999}), and $P_k$ is the $k$-alternating polynomial of $G$. The polynomial $P_k$ is defined by the solution of a linear programming problem which depends on the spectrum of the graph $G$. It is nontrivial to compute $P_k$, and there is not a closed form for it making it difficult to compare to our theorems in general. However, the reader may check the Appendix to see that \eqref{Fiolbound}, Theorem \ref{hoffman theorem}, and Theorem \ref{theo:kindependencenumberbound} are pairwise incomparable. 

If $p$ is a polynomial of degree at most $k$, and $U$ is a $k$-independent set in $G$, then $p(A)$ has a principal submatrix defined by $U$ that is diagonal, with diagonal entries defined by a linear combination of various closed walk. Theorems \ref{theo:kindependencenumberbound} and \ref{hoffman theorem} are obtained by taking $p(A) = A^k$, but hold also for in general for other polynomials of degree at most $k$. It is not clear to us how to choose a polynomial $p$ to optimize the upper bound on $\alpha_k$ for general graphs as such polynomial will likely depend on the graph. Finally, we were able to construct graphs attaining equality in Theorem~\ref{theo:kindependencenumberbound} but not in Theorem~\ref{hoffman theorem}. We leave open whether the bound in Theorem~\ref{hoffman theorem} is attained for some graphs or can be improved in general.

\subsection*{Acknowledgments}
The authors would like to thank Randy Elzinga for helpful discussions. Some of this work was done when the first and third authors were at the SP Coding and Information School in Campinas, Brazil. We gratefully acknowledge support from UNICAMP and the school organizers. We would also like to thank an anonymous referee for helpful comments.

\section*{Appendix}
\appendix
%\section{Appendix}

The following tables compare Fiol's bound on $\alpha_k$, namely equation \eqref{Fiolbound}, with Theorem~\ref{theo:kindependencenumberbound} and Theorem~\ref{hoffman theorem}. We tested all of the named graphs in Sage that do not take any arguments, subject to the constraint that they are regular, connected, and have diameter greater than $k$. Table entries that say ``time" denote that the computation took longer than 60 seconds on a standard laptop. The parameter $\alpha_k$ is computationally hard to determine, and it is not clear how long it would take to calculate the table entries that timed out. A standard laptop was not able to compute $\alpha_2$ of the Balaban 11-cage in 1 hour. Part of the value of our theorems is that they give an efficient way to compute bounds for a parameter for which it may be infeasible to compute an exact value.

\subsection{$k=2$}
\begin{longtable}{lllll}
Graph name & Eq. \eqref{Fiolbound} & Thm. \ref{theo:kindependencenumberbound} & Thm. \ref{hoffman theorem} &  $\alpha_k$\\
\hline
Balaban 10-cage & $73$ & $43$ & $32$ & $17$ \\
Frucht graph & $9$ & $6$ & $6$ & $3$ \\
Meredith Graph & $115$ & $41$ & $20$ & $10$ \\
Moebius-Kantor Graph & $8$ & $10$ & $10$ & $4$ \\
Bidiakis cube & $7$ & $6$ & $5$ & $2$ \\
Gosset Graph & $2$ & $7$ & $8$ & $2$ \\
Balaban 11-cage & $123$ & $68$ & $41$ & time \\
Gray graph & $56$ & $33$ & $38$ & $11$ \\
Nauru Graph & $16$ & $15$ & $10$ & $6$ \\
Blanusa First Snark Graph & $16$ & $10$ & $8$ & $4$ \\
Pappus Graph & $9$ & $11$ & $14$ & $3$ \\
Blanusa Second Snark Graph & $16$ & $10$ & $8$ & $4$ \\
Brinkmann graph & $6$ & $10$ & $9$ & $3$ \\
Harborth Graph & $86$ & $30$ & $24$ & $10$ \\
Perkel Graph & $12$ & $19$ & $18$ & $5$ \\
Harries Graph & $73$ & $43$ & $32$ & $17$ \\
Bucky Ball & $86$ & $35$ & $23$ & $12$ \\
Harries-Wong graph & $73$ & $43$ & $32$ & $17$ \\
Robertson Graph & $4$ & $9$ & $6$ & $3$ \\
Heawood graph & $5$ & $8$ & $2$ & $2$ \\
Cell 600 & $92$ & $53$ & $18$ & $8$ \\
Cell 120 & $1018$ & $351$ & $302$ & time \\
Hoffman Graph & $6$ & $9$ & $10$ & $2$ \\
Sylvester Graph & $8$ & $14$ & $10$ & $6$ \\
Coxeter Graph & $16$ & $15$ & $13$ & $7$ \\
Holt graph & $10$ & $12$ & $14$ & $3$ \\
Szekeres Snark Graph & $70$ & $29$ & $25$ & $9$ \\
Desargues Graph & $13$ & $12$ & $10$ & $4$ \\
Horton Graph & $177$ & $60$ & $50$ & $24$ \\
Dejter Graph & $91$ & $64$ & $44$ & $16$ \\
Tietze Graph & $6$ & $6$ & $5$ & $3$ \\
Double star snark & $34$ & $17$ & $12$ & $6$ \\
Truncated Icosidodecahedron & $211$ & $75$ & $60$ & $26$\\
Durer graph & $9$ & $6$ & $5$ & $2$ \\
Klein 3-regular Graph & $54$ & $32$ & $22$ & $12$ \\
Truncated Tetrahedron & $6$ & $6$ & $5$ & $3$ \\
Dyck graph & $26$ & $20$ & $14$ & $8$ \\
Klein 7-regular Graph & $3$ & $6$ & $17$ & $3$ \\
Tutte 12-Cage & $132$ & $78$ & $44$ & time \\
Ellingham-Horton 54-graph & $92$ & $33$ & $32$ & $11$ \\
Tutte-Coxeter graph & $20$ & $18$ & $10$ & $6$ \\
Ellingham-Horton 78-graph & $148$ & $48$ & $38$ & $18$\\
Ljubljana graph & $132$ & $70$ & $44$ & $26$ \\
Tutte Graph & $76$ & $28$ & $21$ & $10$ \\
F26A Graph & $18$ & $16$ & $12$ & $6$ \\
Watkins Snark Graph & $74$ & $30$ & $25$ & $9$ \\
Flower Snark & $13$ & $11$ & $7$ & $5$ \\
Markstroem Graph & $29$ & $14$ & $11$ & $6$ \\
Wells graph & $6$ & $12$ & $22$ & $2$ \\
Folkman Graph & $11$ & $12$ & $10$ & $3$ \\
Foster Graph & $94$ & $56$ & $44$ & $21$ \\
McGee graph & $15$ & $13$ & $10$ & $5$ \\
Franklin graph & $6$ & $7$ & $6$ & $2$ \\
Hexahedron & $2$ & $5$ & $2$ & $2$ \\
Dodecahedron & $15$ & $10$ & $9$ & $4$ \\
Icosahedron & $2$ & $3$ & $7$ & $2$ \\
\end{longtable}

\subsection{$k=3$}
\begin{longtable}{lllll}
Graph name & Eq. \eqref{Fiolbound} & Thm. \ref{theo:kindependencenumberbound} & Thm. \ref{hoffman theorem} & $\alpha_k$\\
\hline
Balaban 10-cage & $41$ & $37$ & $36$ & $9$ \\
Frucht graph & $4$ & $5$ & $6$ & $2$ \\
Meredith Graph & $85$ & $35$ & $45$ & $7$ \\
Moebius-Kantor Graph & $2$ & $8$ & $8$ & $2$ \\
Balaban 11-cage & $72$ & $56$ & $59$ & $16$ \\
Gray graph & $27$ & $29$ & $35$ & $9$ \\
Nauru Graph & $6$ & $12$ & $14$ & $4$ \\
Blanusa First Snark Graph & $8$ & $8$ & $8$ & $2$ \\
Pappus Graph & $3$ & $9$ & $11$ & $3$ \\
Blanusa Second Snark Graph & $9$ & $7$ & $8$ & $2$ \\
Harborth Graph & $70$ & $27$ & $20$ & $6$ \\
Harries Graph & $41$ & $37$ & $35$ & $10$ \\
Bucky Ball & $62$ & $29$ & $30$ & $7$ \\
Harries-Wong graph & $41$ & $37$ & $35$ & $9$ \\
Cell 600 & $42$ & $45$ & $14$ & $3$ \\
Cell 120 & $847$ & $299$ & $287$ & time \\
Hoffman Graph & $2$ & $8$ & $11$ & $2$ \\
Coxeter Graph & $6$ & $11$ & $13$ & $4$ \\
Szekeres Snark Graph & $50$ & $24$ & $24$ & $6$ \\
Desargues Graph & $5$ & $10$ & $10$ & $2$ \\
Horton Graph & $162$ & $51$ & $50$ & $14$ \\
Dejter Graph & $42$ & $57$ & $56$ & $8$ \\
Double star snark & $19$ & $13$ & $15$ & $4$ \\
Truncated Icosidodecahedron & $180$ & $64$ & $60$ & $18$
\\
Durer graph & $4$ & $4$ & $7$ & $2$ \\
Klein 3-regular Graph & $28$ & $24$ & $29$ & $7$ \\
Dyck graph & $12$ & $17$ & $16$ & $4$ \\
Tutte 12-Cage & $72$ & $67$ & $77$ & $21$ \\
Ellingham-Horton 54-graph & $78$ & $29$ & $29$ & $8$ \\
Tutte-Coxeter graph & $7$ & $16$ & $20$ & $5$ \\
Ellingham-Horton 78-graph & $140$ & $42$ & $40$ & $11$
\\
Ljubljana graph & $80$ & $60$ & $63$ & $17$ \\
Tutte Graph & $62$ & $23$ & $21$ & $6$ \\
F26A Graph & $8$ & $14$ & $13$ & $3$ \\
Watkins Snark Graph & $54$ & $24$ & $24$ & $6$ \\
Flower Snark & $5$ & $9$ & $10$ & $2$ \\
Markstroem Graph & $19$ & $11$ & $10$ & $3$ \\
Wells graph & $2$ & $7$ & $13$ & $2$ \\
Folkman Graph & $4$ & $10$ & $15$ & $2$ \\
Foster Graph & $52$ & $48$ & $50$ & $15$ \\
McGee graph & $4$ & $10$ & $12$ & $2$ \\
Dodecahedron & $5$ & $7$ & $11$ & $2$ \\
\end{longtable}

\subsection{$k=4$}
\begin{longtable}{lllll}
Graph name & Eq. \eqref{Fiolbound} & Thm. \ref{theo:kindependencenumberbound} & Thm. \ref{hoffman theorem} &  $\alpha_k$\\
\hline
Balaban 10-cage & $21$ & $40$ & $32$ & $5$ \\
Meredith Graph & $63$ & $39$ & $20$ & $5$ \\
Balaban 11-cage & $37$ & $60$ & $39$ & $9$ \\
Gray graph & $17$ & $31$ & $14$ & $3$ \\
Harborth Graph & $55$ & $27$ & $13$ & $4$ \\
Harries Graph & $21$ & $40$ & $32$ & $5$ \\
Bucky Ball & $41$ & $30$ & $20$ & $6$ \\
Harries-Wong graph & $21$ & $40$ & $32$ & $5$ \\
Cell 600 & $16$ & $39$ & $14$ & $2$ \\
Cell 120 & $675$ & $309$ & $250$ & time \\
Szekeres Snark Graph & $33$ & $25$ & $17$ & $5$ \\
Desargues Graph & $2$ & $11$ & $10$ & $2$ \\
Horton Graph & $144$ & $55$ & $26$ & $8$ \\
Dejter Graph & $20$ & $59$ & $16$ & $2$ \\
Truncated Icosidodecahedron & $151$ & $70$ & $40$ & $11$
\\
Klein 3-regular Graph & $14$ & $25$ & $22$ & $4$ \\
Dyck graph & $5$ & $18$ & $14$ & $2$ \\
Tutte 12-Cage & $38$ & $72$ & $44$ & $9$ \\
Ellingham-Horton 54-graph & $63$ & $31$ & $12$ & $4$ \\
Ellingham-Horton 78-graph & $129$ & $44$ & $24$ & $6$ \\
Ljubljana graph & $46$ & $64$ & $44$ & $8$ \\
Tutte Graph & $48$ & $25$ & $20$ & $4$ \\
F26A Graph & $3$ & $14$ & $12$ & $2$ \\
Watkins Snark Graph & $37$ & $25$ & $19$ & $5$ \\
Markstroem Graph & $11$ & $11$ & $6$ & $3$ \\
Foster Graph & $25$ & $51$ & $44$ & $5$ \\
Dodecahedron & $2$ & $7$ & $9$ & $2$ \\
\end{longtable}

\iffalse
\subsection{$k=5$}
\begin{tabular}{lllll}
Graph name & Eq. \eqref{Fiolbound} & Thm. \ref{theo:kindependencenumberbound} & Thm. \ref{hoffman theorem} &  $\alpha_k$\\
\hline
Balaban 10-cage & $12$ & $36$ & $36$ & $5$ \\
Meredith Graph & $52$ & $34$ & $45$ & $4$ \\
Balaban 11-cage & $19$ & $52$ & $59$ & $4$ \\
Gray graph & $7$ & $28$ & $35$ & $3$ \\
Harborth Graph & $41$ & $26$ & $12$ & $3$ \\
Harries Graph & $10$ & $36$ & $35$ & $5$ \\
Bucky Ball & $27$ & $26$ & $16$ & $3$ \\
Harries-Wong graph & $10$ & $36$ & $35$ & $5$ \\
Cell 120 & $520$ & $284$ & $131$ & $24$ \\
Szekeres Snark Graph & $19$ & $22$ & $13$ & $3$ \\
Horton Graph & $125$ & $50$ & $50$ & $6$ \\
Dejter Graph & $8$ & $56$ & $56$ & $2$ \\
Truncated Icosidodecahedron & $120$ & $63$ & $60$ & $8$
\\
Klein 3-regular Graph & $4$ & $20$ & $29$ & $2$ \\
Tutte 12-Cage & $15$ & $66$ & $77$ & $9$ \\
Ellingham-Horton 54-graph & $49$ & $28$ & $29$ & $3$ \\
Ellingham-Horton 78-graph & $117$ & $40$ & $40$ & $4$ \\
Ljubljana graph & $23$ & $58$ & $63$ & $8$ \\
Tutte Graph & $36$ & $22$ & $25$ & $3$ \\
Watkins Snark Graph & $23$ & $22$ & $13$ & $3$ \\
Markstroem Graph & $6$ & $10$ & $6$ & $2$ \\
Foster Graph & $14$ & $47$ & $50$ & $3$ \\
\end{tabular}

\fi

\end{document}